\documentclass[11pt,reqno]{amsart}

\usepackage[T1]{fontenc}
\usepackage[utf8]{inputenc}
\usepackage[margin=1in]{geometry}
\usepackage{amsmath,amssymb,amsthm,amsfonts}
\usepackage{xcolor}
\usepackage{enumitem}
\usepackage{microtype}
\usepackage{indentfirst}
\usepackage[colorlinks=true,linkcolor={blue!75!black},citecolor={blue!80!black},urlcolor={blue!65!black}]{hyperref}

\numberwithin{equation}{section}

\theoremstyle{plain}
\newtheorem{theorem}{Theorem}[section]
\newtheorem{lemma}[theorem]{Lemma}

\newtheorem{problem}[theorem]{Problem}

\theoremstyle{definition}
\newtheorem{definition}[theorem]{Definition}

\newcommand{\N}{\mathbb{N}}

\begin{document}

\title{Gaps in Multiplicative Sidon Sets}

\author{Wouter van Doorn}
\address{Groningen, the Netherlands}
\email{wonterman1@hotmail.com}

\author{Pietro Monticone}
\address{Harmonic, London, United Kingdom}
\email{pietro.monticone@harmonic.fun}

\author{Quanyu Tang}
\address{School of Mathematics and Statistics, Xi'an Jiaotong University, Xi'an 710049, P. R. China}
\email{tang\_quanyu@163.com}

\subjclass[2020]{11B75, 11B05, 11B83, 11N05, 11N36, 05D15, 68V20}
\keywords{multiplicative Sidon set, prime gaps, Baker--Harman--Pintz,
Laishram--Murty, Hall's marriage theorem,
formal verification, automated theorem proving}

\begin{abstract}
For a positive integer $n$, let $g(n)$ denote the infimum of all real numbers
$L$ such that there exists a multiplicative Sidon set
$A\subseteq\{1,2,\dots,n\}$ that intersects every interval
$[x,x+L]\subseteq[1,n]$. S\'ark\"ozy asked for estimates on $g(n)$, and he in
particular asked whether one has $g(n)\le\sqrt n$ for every $n\in\N$. We first
show that this estimate does indeed hold, with a proof that was autonomously
discovered and formally verified in Lean by
\textnormal{\href{https://aristotle.harmonic.fun/}{Aristotle}}. Next, we improve the
upper bound further and, with $\rho = \frac{13-\sqrt{69}}{10} < 0.47$, prove
that $g(n)\ll_{\varepsilon} n^{\rho+\varepsilon}$ for every $\varepsilon > 0$.
\end{abstract}

\maketitle

\section{Introduction}

With $\N$ the set of positive integers, we say that a set $A\subseteq\N$ is a
\emph{multiplicative Sidon set} if all products $aa'$ with $a,a'\in A$ and $a\le
a'$ are distinct. Naturally, the set of prime numbers is such a set, from which
we deduce that there exist multiplicative Sidon sets in $[n]:=\{1,2,\dots,n\}$
with at least $\pi(n)$ integers. Erd\H{o}s~\cite{Erdos1938, Erdos1968} proved
that this is the right asymptotic and determined the correct order of growth of
the error term. More precisely, he showed that the maximum size of a
multiplicative Sidon set contained in $[n]$ is
\begin{equation} \label{eq:erdasymp}
\pi(n)+\Theta\!\left(\frac{n^{3/4}}{(\log n)^{3/2}}\right).
\end{equation}
Thus the extremal-size problem is well-understood, even though it is still open
what the optimal implied constant is; see e.g. \cite{Bloom425}. On the other
hand, much less is known about gap sizes in multiplicative Sidon sets. In this
regard, S\'ark\"ozy~\cite[Problem~34]{Sarkozy2001} asked the following questions
in his 2001 list of unsolved problems.

\begin{problem}[S\'ark\"ozy~\cite{Sarkozy2001}]\label{prob:34}
How small can one make the maximal gap between the consecutive elements of a
multiplicative Sidon set selected from $\{1,2,\dots,n\}$? Does there exist for
all $n$ a multiplicative Sidon set $A\subseteq\{1,2,\dots,n\}$ so that
$A\cap [m,m+\sqrt n]\neq\varnothing$ for all $1\le m<m+\sqrt n\le n$?
\end{problem}

As S\'ark\"ozy~\cite{Sarkozy2001} points out, if one could prove that the
sequence of prime numbers $p_1, p_2, \ldots$ satisfies the estimate $p_{i+1}-p_i
\le p_i^{1/2}$, then an affirmative answer to the second question would follow.
However, even under the assumption of the Riemann Hypothesis such an estimate is
not known.

To quantify the gap size question, let us introduce the following function.

\begin{definition}
For a positive integer $n$, define $g(n)$ as the infimum of all real numbers
$L\ge 0$ for which there exists a multiplicative Sidon set $A\subseteq[n]$
satisfying $A\cap [x,x+L]\neq\varnothing$ for every $x\in\mathbb{R}$ with
$[x,x+L]\subseteq[1,n]$.
\end{definition}

By combining the prime number theorem, Erd\H{o}s' estimate \eqref{eq:erdasymp},
and the pigeonhole principle, the lower bound $g(n)\ge(1+o(1))\log n$ quickly
follows.

In the other direction, the best known upper bound on $g(n)$ obtained purely
from prime gaps is $g(n) \le n^{21/40}$ for all large enough $n$, by a theorem
of Baker, Harman and Pintz~\cite{BakerHarmanPintz2001}.

In this paper we manage to lower the exponent and reach the square-root barrier,
answering S\'ark\"ozy's second question affirmatively.

\begin{theorem}\label{thm:main-all}
For all $n\in\N$ one has
\[
g(n)\le\lfloor\sqrt n\rfloor.
\]
\end{theorem}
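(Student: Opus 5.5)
The plan is to build $A$ from one ``large'' prime per element. Put $m=\lfloor\sqrt n\rfloor$ and look for a set of the form
\[
A=\{1\}\cup\{k_p\,p : p\in P\},
\]
where $P$ is a set of primes with $p>m$, each prime of $P$ is used exactly once, and the multipliers satisfy $1\le k_p\le n/p<\sqrt n$.

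The first step is to show that any such $A$ is a multiplicative Sidon set. Since $k_p<\sqrt n<p$, every prime factor of $k_p$ is at most $\sqrt n$, so $p$ is the unique prime factor of $k_p p$ exceeding $\sqrt n$, and it occurs to the first power. Hence for elements $a=k_pp$ and $a'=k_qq$, the multiset of prime factors of $aa'$ exceeding $\sqrt n$ is exactly $\{p,q\}$, which recovers the pair $\{a,a'\}$. A product $1\cdot a$ has exactly one such large prime factor, so it cannot equal a product of two non-unit elements, and $1\cdot 1$ is the only product equal to $1$. (Some care is needed when $\sqrt n$ is an integer or when $p$ is close to $\sqrt n$; I would restrict to primes $p>\sqrt n$ to be safe.)

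The second step reduces the covering requirement to a matching problem. The element $1$, together with the first prime above $\sqrt n$ (at most $2\sqrt n$ by Bertrand), handles the initial segment. Then I would cut $(\sqrt n,n]$ into consecutive blocks $I_1,\dots,I_T$ of length at most $m$, with $T\approx \sqrt n$, arranged so that one element in each block forces every interval $[x,x+m]\subseteq[1,n]$ to meet $A$. For instance, one can place a point in each window $[jm,(j+1)m]$ and use the fact that consecutive chosen points then differ by at most about $m$. This boundary bookkeeping has to be done carefully so that the bound is exactly $\lfloor\sqrt n\rfloor$; if it does not come out exactly, I would refine the blocks greedily.

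Now form the bipartite graph joining block $I_j$ to prime $p>\sqrt n$ whenever some multiple $kp$ with $k<\sqrt n$ lies in $I_j$. A matching that saturates all blocks yields $A$, so by Hall's marriage theorem it suffices to show that every set $S$ of blocks has at least $|S|$ neighbouring primes.

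The main obstacle is verifying Hall's condition. For a single block $I=(x,x+m]$ the neighbouring primes are exactly those lying in the intervals $I/k=(x/k,(x+m)/k]$ with $p>\sqrt n$. For large $k$ these intervals have length about $m/k$ but sit at height $x/k$, and by choosing $k\approx x/\sqrt n$ the interval $I/k$ looks like $(y,y(1+1/\sqrt n))$ near $y\approx\sqrt n$. Individual short intervals need not contain primes, so I would not argue block by block. Instead I would argue with unions: a union of $t$ blocks, scaled down by all admissible $k$, covers a region whose prime count (via Chebyshev or explicit $\pi(x)$ bounds of Rosser--Schoenfeld type, or the Laishram--Murty estimates suggested by the keywords) exceeds $t$. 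The double counting uses that each prime $p$ meets at most about $n/(pm)+1$ blocks, so the total number of block--prime incidences is large compared with the maximum degree of a prime.

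I expect the bound to hold comfortably for large $n$ with explicit constants. The finitely many remaining small $n$ would be handled by direct computation, in the spirit of the Lean verification mentioned in the abstract.
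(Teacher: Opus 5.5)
\textbf{There is a genuine gap: the key step is unproved, and the covering bookkeeping fails.} Your Sidon criterion is correct: one prime $p>\sqrt n$ per element with cofactor below $\sqrt n$. But the step that carries all the difficulty, Hall's condition, is not proved, and the tools you name cannot prove it for all $n$. The neighbours of a block $(x,x+m]$ are primes in the pieces $(x/k,(x+m)/k]$, which have relative length about $1/\sqrt n$. Explicit bounds for $\pi(y)$ say nothing about such pieces, and already $k=1$ is the problem of primes in $(x,x+\sqrt x]$, which is open even under RH. Your double count is also off. A prime just above $\sqrt n$ has about $n/p\approx\sqrt n$ multiples below $n$, spaced more than $m$ apart, so it is adjacent to essentially every block, not to $n/(pm)+1$ of them. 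An unweighted count therefore gives nothing. This is why the paper's proof of Theorem~\ref{thm:main-rho} uses weights $1/k$ (Lemma~\ref{lem:weighted-Hall}) and needs every block to have weighted degree of order $\log n$. It obtains this from the Laishram--Murty estimate, with Baker--Harman--Pintz handling the lowest blocks. Those results hold only for sufficiently large $x$, with no computed threshold, so ``handle the small $n$ by direct computation'' is not an available fallback. At best your route gives $g(n)\le\sqrt n$ for large $n$, not for all $n$.

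The covering bookkeeping fails in two places. First, taking $x$ slightly above $1$ shows that $A$ must meet $\{2,\dots,m+1\}$. Every element of your $A$ other than $1$ is a multiple of a prime $p>\sqrt n$, hence at least $m+1$, with equality only if $m+1$ is prime. So the construction already fails for $n=143$ ($m=11$), and Bertrand's bound $2\sqrt n$ is far too weak here. Second, one point in each window $[jm,(j+1)m]$ allows consecutive gaps close to $2m$, not $m$, so you would need about $2\sqrt n$ windows of length about $m/2$, which makes the matching problem harder still.

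The missing idea is that no primes are needed. The paper takes $q=\lfloor\sqrt n\rfloor$ and $A=\{a\le n: a\equiv 1\pmod q\}$, which trivially meets every interval of length $q$. Since $n<(q+1)^2$, every element is $1+qi$ with $0\le i\le q+1$. Then $(1+qi)(1+qj)=(1+qk)(1+q\ell)$ reduces to $\Delta+q(ij-k\ell)=0$ with $\Delta=i+j-k-\ell$. If $\Delta=0$, the pairs have equal sums and products, hence are equal; $|\Delta|\ge q$ is ruled out by a short size estimate using $i,j,k,\ell\le q+1$.
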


By making use of two external inputs it is possible to do even better, however.
The first is the aforementioned theorem of Baker, Harman and Pintz
\cite{BakerHarmanPintz2001} that every interval $(x - x^{21/40}, x]$ contains a
prime for all sufficiently large $x$. The second is a lower bound of
Laishram--Murty for
\[
\sum_{1 \le m\le x^\beta}\left\{\pi\!\left(\frac{x+x^\alpha}{m}\right)
-\pi\!\left(\frac{x}{m}\right)\right\}
\]
with suitable parameters $\alpha,\beta$~\cite[Eq.~(12)]{LaishramMurty2012}. This
gives the following asymptotic improvement.

\begin{theorem}\label{thm:main-rho}
Let
\[
\rho:=\frac{13-\sqrt{69}}{10}\approx 0.46934.
\]
Then for every $\varepsilon>0$ there exists a constant $C_\varepsilon>0$ such
that
\[
g(n)\le C_\varepsilon\, n^{\rho+\varepsilon}
\]
for all $n\in\N$.
\end{theorem}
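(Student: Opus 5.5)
The plan is to build $A$ from \emph{almost-primes with a distinguished large prime factor}, and to assign large primes to short intervals by Hall's marriage theorem. The basic observation is a Sidon criterion. Suppose every $a\in A$ can be written as $a=m_a p_a$ with $p_a$ prime, the primes $p_a$ pairwise distinct, and $p_a>(\max_{b\in A} m_b)^2$. Then $A$ is a multiplicative Sidon set. Indeed, in $aa'=m_am_{a'}p_ap_{a'}$ the primes $p_a,p_{a'}$ are the only prime factors exceeding $m_am_{a'}$ (counted with multiplicity), so the product recovers the multiset $\{p_a,p_{a'}\}$ and hence $\{a,a'\}$. This is the same mechanism that should underlie Theorem~\ref{thm:main-all}. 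Here we only need a quantitative version with $m_a\le n^{\beta}$ and $p_a\ge n^{1-\beta-o(1)}$, which requires $\beta<1/3$ roughly; the small range $x\le n^{\gamma}$ will be handled separately with $m_a=1$.

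Fix $L=n^{\rho+\varepsilon}$ and cut $[1,n]$ into consecutive blocks $I_j=(x_j,x_j+L/2]$. It suffices to place one element of $A$ in each block. For blocks with $x_j\le n^{40(\rho+\varepsilon)/21}$ I take $p_a$ to be a prime in $I_j$, which exists by Baker--Harman--Pintz, and set $m_a=1$. For the remaining blocks I look for $a=mp\in I_j$ with $1\le m\le x_j^{\beta}$. A block $I_j$ is ``adjacent'' to a prime $p$ if some such $mp$ lies in $I_j$. I then need a system of distinct representatives: an injective choice of primes for the blocks, avoiding the primes already used in the small range, which are $\le n^{40\rho/21}$ and hence can be excluded by requiring $p>x^{1-\beta}$ to exceed that.

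Hall's condition is verified by double counting. The Laishram--Murty bound \cite[Eq.~(12)]{LaishramMurty2012} gives, for the block at $x$ with $x^{\alpha}\asymp L$,
\[
\#\{(m,p): m\le x^\beta,\ mp\in(x,x+x^\alpha]\}\ \ge\ N(x)\gg \frac{x^{\alpha}\log(x^\beta)}{\log x}.
\]
On the other hand, a prime $p$ is adjacent to at most $D\le x^{\beta}$ blocks of comparable size, at most one per value of $m$. A block-size-dependent regularity argument then gives Hall's condition: any $k$ blocks produce at least $kN$ incidences, each prime accounts for at most $D$ of them, and it remains to ensure $N\ge D$ after dyadically grouping blocks by size. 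Because the incidences from different $m$ land at different scales $x/m$, I would actually run Hall separately on dyadic ranges of $x$, allotting disjoint ranges of primes. Finally I check $p>x^{1-\beta}>n^{2\beta}$ on each range.

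The main obstacle is the parameter bookkeeping. The admissible $(\alpha,\beta)$ in Laishram--Murty's estimate form a constrained region, and the three requirements must all be met at once: the Sidon separation $1-\beta>2\beta$ on each scale, the Hall inequality $N\ge D$, and the handoff point $n^{40\rho/21}$ where Baker--Harman--Pintz takes over. I expect optimizing these against $\alpha=\rho+\varepsilon$ to reduce to the quadratic $5\rho^2-13\rho+5=0$, whose smaller root is $\rho=(13-\sqrt{69})/10$. First I would make the Hall count uniform in $x\in[n^{\gamma},n]$. If the crude bound $D\le x^\beta$ is too lossy, I would replace it by the number of $m\le x^\beta$ with $p\in(x/m,(x+L)/m]$, which is averaged, again via Laishram--Murty.
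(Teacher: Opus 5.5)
Your proposal has a genuine gap, and it comes from your Sidon criterion. Requiring $p_a>(\max_b m_b)^2$ forces $\beta<1/3$, as you note. In that range the Laishram--Murty input (Lemma~\ref{lem:LM-param}) is vacuous. For $\alpha<\tfrac12$ every admissible $\delta$ satisfies $0<\delta<(5\alpha-2)/3<\tfrac16$. When $\beta\le\tfrac13$, one has $1-\alpha^2-\beta(2-\beta)=(1-\beta)^2-\alpha^2>\tfrac49-\tfrac14=\tfrac{7}{36}$. Hence $\eta<1-\alpha-\tfrac76<0$.

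The quadratic $5\rho^2-13\rho+5=0$ that you anticipate arises exactly at the corner $\beta\to\alpha$, $\delta\to(5\alpha-2)/3$, where $\eta\to 1-\alpha-3(1-2\alpha)/(5\alpha-2)$. So you need $\beta$ just below $\alpha\approx 0.47$, which your separation condition $1-\beta>2\beta$ forbids.

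The missing observation is that primality makes the square unnecessary. Suppose $a_i=m_ip_i$ with $m_i\le J<p_i$ and the $p_i$ distinct, and $a_1a_2=a_3a_4$ with $p_1\notin\{p_3,p_4\}$. Then $p_1\mid m_3m_4$, which is impossible because the prime $p_1$ exceeds both $m_3$ and $m_4$ (Lemma~\ref{lem:private-prime}). Equivalently, in your own argument the prime factors of $m_am_{a'}$ are bounded by $\max(m_a,m_{a'})$, not by $m_am_{a'}$. With this criterion you only need $p>J=\lfloor n^\beta\rfloor$. That holds automatically, since $p>x^{1-\beta}\ge n^{\alpha}$ on the large range.

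Once $\beta$ is near $\alpha$, two further steps of your plan break.

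\emph{Excluding the small-range primes.} The Baker--Harman--Pintz primes cannot be excluded by size: just above the handoff $x\approx n^{40\alpha/21}$, the threshold $x^{1-\beta}$ lies far below $n^{40\alpha/21}$. The paper uses sparsity instead. Those primes lie one per block of length $H$, so for each fixed $m$ at most two of them satisfy $mp\in B_i$.

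\emph{The Hall double count.} Your count needs $\min N\ge\max D$ over all scales simultaneously, because one prime serves blocks at many scales. A single Laishram--Murty interval gives only $N\ge c_0x^{\alpha}\approx n^{40\alpha^2/21}\approx n^{0.42}$ near the handoff. Meanwhile a prime near $n^{1-\beta}$ meets about $n^{\beta}\approx n^{0.47}$ blocks, including ones near the handoff. Running Hall on dyadic ranges with disjoint pools of primes is not supported by Laishram--Murty, which only bounds the total over all $m\le x^\beta$ and all primes.

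The paper fixes this with the weighted Hall lemma (Lemma~\ref{lem:weighted-Hall}) and edge weights $\sum 1/m$. Each prime then has total weight at most $\sum_{m\le n}1/m$. Each block has weight at least $c_0x^{\alpha-\beta}-2\sum_{m\le n}1/m$, using $1/m\ge x^{-\beta}$ and the two-per-$m$ bound. Alternatively, tiling each block of length $H\ge 2n^\alpha$ by many Laishram--Murty intervals gives $N\gg n^{\alpha}>n^{\beta}$, which would also rescue your unweighted count.
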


The proof of Theorem~\ref{thm:main-all} was autonomously discovered and
formally verified in Lean by \emph{Aristotle}. The proof of
Theorem~\ref{thm:main-rho} was found by the authors and subsequently
formalized in Lean, also by \emph{Aristotle}. Further information on the
discovery and verification workflow can be found in the
\hyperref[sec:appendix]{appendix}.

\section{An elementary construction}\label{sec:elem}

In this section we prove Theorem~\ref{thm:main-all}.
\begin{proof}[Proof of Theorem~\ref{thm:main-all}]
We put
\[
q:=\lfloor\sqrt n\rfloor,\qquad
A:=\{a\le n:a\equiv 1\pmod q\}.
\]

It is clear that $A$ intersects every interval of length $q$, so it is
sufficient to show that $A$ is a multiplicative Sidon set. Since $1+q(q+2)>n$,
every element of $A$ has the form $1+qi$ with $0\le i\le q+1$. Thus it suffices
to prove that, whenever $i, j, k, \ell$ satisfy
\[
0\le i\le j\le q+1 \qquad \text{and} \qquad 0\le k\le \ell\le q+1,
\]
then the equality
\[
(1+qi)(1+qj)=(1+qk)(1+q\ell)
\]
implies $i=k$ and $j=\ell$.

Expanding the brackets on both sides gives
\[
1+q(i+j)+q^2ij=1+q(k+\ell)+q^2k\ell.
\]
After subtracting $1$ and dividing by $q$, we obtain
\begin{equation} \label{eq:delta}
(i+j-k-\ell)+q(ij-k\ell)=0.
\end{equation}
With $\Delta:=i+j-k-\ell$, we then see $q\mid\Delta$, so either $\Delta=0$ or
$|\Delta|\ge q$.

If $\Delta=0$, then $ij=k\ell$ by equation \eqref{eq:delta}, so the pairs
$(i,j)$ and $(k,\ell)$ have the same sum and the same product. They are
therefore the two roots of the same quadratic equation $X^2-(i+j)X+ij=0$, and
thus $i=k$ and $j=\ell$. It is therefore sufficient to show that the assumption
$|\Delta| \ge q$ leads to a contradiction, and by symmetry we may further assume
$\Delta \ge q$.

Writing $s:=k+\ell$, we see that equation \eqref{eq:delta} implies $s > 0$ and
\begin{equation} \label{eq:ijkls2}
ij < k\ell \le \left\lfloor \frac{s^2}{4} \right\rfloor.
\end{equation}
From $j\le q+1$ and $i+j=s+\Delta$ we get
\[
i=s+\Delta-j\ge s-1.
\]
Thus $i\in[s-1,q+1]$ and
\[
ij=i(s+\Delta-i)\ge i(s+q-i).
\]
The latter product is a concave quadratic polynomial in $i$, so its minimum on
$[s-1,q+1]$ is attained at an endpoint, which gives $ij \ge (s-1)(q+1)$. Now,
$\Delta \ge q$ implies
\[
s = i+j - \Delta \le 2q+2 - q = q+2.
\]
We therefore obtain
\[
ij \ge (s-1)(q+1) \ge (s-1)^2 \ge \left\lfloor \frac{s^2}{4} \right\rfloor,
\]
contradicting inequality \eqref{eq:ijkls2}. This completes the proof.
\end{proof}

\section{Preliminaries}\label{sec:prelim}

\subsection{A multiplicative Sidon criterion}

The fact that the set of primes is a multiplicative Sidon set can be seen as a
special case of the following more general criterion.

\begin{lemma}\label{lem:private-prime}
Let $J\ge 1$, and let $A$ be a set of integers such that every $a_i\in A$ can be
written in the form $a_i=m_ip_i$, where $p_i$ is a prime, $1 \le m_i \le J <
p_i$, and the primes $p_i$ are pairwise distinct as $a_i$ ranges over $A$. Then
$A$ is a multiplicative Sidon set.
\end{lemma}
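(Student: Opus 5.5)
We argue directly from the unique factorization of the products. Suppose $a_ia_j=a_ka_\ell$ with $a_i\le a_j$ and $a_k\le a_\ell$, all four in $A$. The goal is to show $\{i,j\}=\{k,\ell\}$ as multisets; since $a_i\le a_j$ and $a_k\le a_\ell$, this forces $a_i=a_k$ and $a_j=a_\ell$. Write the equation as
\[
m_im_j\,p_ip_j=m_km_\ell\,p_kp_\ell .
\]

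\textbf{Step 1: the prime $p_i$ is the private prime of $a_k$ or of $a_\ell$.} Since $p_i$ divides the left-hand side, it divides $m_km_\ell p_kp_\ell$. Because $1\le m_k,m_\ell\le J<p_i$, the prime $p_i$ divides neither $m_k$ nor $m_\ell$. Hence $p_i\in\{p_k,p_\ell\}$. The primes are pairwise distinct across elements of $A$, so this means $i=k$ or $i=\ell$, i.e.\ $a_i\in\{a_k,a_\ell\}$.

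\textbf{Step 2: cancel and repeat.} Cancel the common element $a_i$ from both sides. What remains is $a_j$ equal to the other element of $\{a_k,a_\ell\}$. So the multisets $\{a_i,a_j\}$ and $\{a_k,a_\ell\}$ coincide. (Alternatively, the same prime argument applied to $p_j$ gives $a_j\in\{a_k,a_\ell\}$, and one then matches up the two pairs.) Together with the orderings $a_i\le a_j$ and $a_k\le a_\ell$, this gives $a_i=a_k$ and $a_j=a_\ell$.

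\textbf{Main obstacle: repeated elements.} The only delicate point is a product such as $a_i^2$ where $i=j$. Step 1 still applies in this case, since it only uses $p_i\nmid m_k m_\ell$. After cancelling one copy of $a_i$, the remaining equation is an equality of single elements, so the multiset bookkeeping is immediate.
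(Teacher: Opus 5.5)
Your proposal is correct and follows essentially the same argument as the paper: since $p_i>J\ge m_k,m_\ell$, the prime $p_i$ cannot divide $m_km_\ell$, so it must equal $p_k$ or $p_\ell$. Distinctness of the private primes then forces $a_i\in\{a_k,a_\ell\}$, and cancelling this common (nonzero) factor finishes the proof; the paper merely phrases the first step as a proof by contradiction.
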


\begin{proof}
Assume $a_1a_2=a_3a_4$, and suppose by contradiction that $a_1 \notin \{a_3,
a_4\}$. Write $a_i=m_ip_i$ with $1\le m_i\le J<p_i$ and $p_1$ different from
$p_3$ and $p_4$. Since $p_1>J\ge m_3,m_4$, the prime $p_1$ cannot divide
$m_3m_4$. Hence $p_1$ must divide $p_3p_4$, so $p_1=p_3$ or $p_1=p_4$,
contradicting the assumption that these primes are distinct. Hence, $a_1 \in
\{a_3, a_4\}$ and cancelling the common factor gives $\{a_1, a_2\} = \{a_3,
a_4\}$.
\end{proof}

\subsection{Primes in short intervals}

As we alluded to before, we shall use the following result of Baker, Harman and
Pintz~\cite[Theorem~1]{BakerHarmanPintz2001}.

\begin{lemma}[\cite{BakerHarmanPintz2001}]\label{lem:BHP}
For all sufficiently large real numbers $x$, the interval $(x-x^{21/40},x]$
contains a prime.
\end{lemma}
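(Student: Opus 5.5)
Lemma~\ref{lem:BHP} is an external input, and the plan is to invoke it directly: it is exactly \cite[Theorem~1]{BakerHarmanPintz2001}, which asserts that for all $x\ge x_0$ the interval $[x-x^{0.525},x]$ contains a prime. Since $0.525=21/40$, the only point to check is the endpoint convention. The source states a closed interval, and the possibility that $x-x^{21/40}$ is itself prime is harmless: one applies the theorem at a slightly shifted point, or with exponent $21/40-\delta$ and absorbs the difference for large $x$. So no genuinely new argument is needed. In the formalization this lemma would enter as an axiom-like black box or an imported statement.

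If one wanted to reconstruct the proof, I would follow the Baker--Harman--Pintz route.
\begin{enumerate}[label=(\roman*)]
\item Set $y=x^{21/40}$. It suffices to show $\sum_{x-y<n\le x}\Lambda(n)\gg y$, or rather a weighted lower-bound version of this, since the full asymptotic is out of reach at this exponent.
\item Decompose the prime indicator with Harman's sieve. Buchstab's identity is iterated to write the count of primes as a combination of sums over products of integers whose prime factors lie in specified ranges.
\item Each resulting Type~I/II sum is compared with the corresponding sum over a long interval $(x-x/\log^{A}x,\,x]$. Via Perron's formula, this comparison reduces to mean-value and large-value estimates for Dirichlet polynomials, combined with zero-density input and Watt's mean-value theorem for $\zeta$ twisted by a Dirichlet polynomial.
\item Sums that cannot be asymptotically evaluated are discarded with a nonnegative sign. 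One then verifies numerically that the total discarded mass is strictly less than the main term, leaving a positive lower bound.
\end{enumerate}

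The main obstacle is step~(iv). This numerical bookkeeping, namely the many-dimensional integrals of Buchstab functions over the "bad" regions, is what pins the exponent at $21/40$. It is far too lengthy to redo here, which is why we simply cite \cite{BakerHarmanPintz2001}.
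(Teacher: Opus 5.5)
Your proposal matches the paper, which also gives no proof of its own and simply quotes Baker--Harman--Pintz's Theorem~1, using $0.525=21/40$. Your endpoint remark holds if you shift to the right: applying the closed-interval theorem at any $y\in(x,q)$, where $q$ is the least prime exceeding $x$, gives a prime $p\le x$ with $p\ge y-y^{21/40}>x-x^{21/40}$. Your alternative of using exponent $21/40-\delta$ would need a stronger theorem than the one available, so you should drop it.
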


We will also use the following consequence
of~\cite[Eq.~(12)]{LaishramMurty2012}.

\begin{lemma}\label{lem:LM-param}
Let $0<\beta<\alpha<\tfrac12$, and let $\delta>0$ satisfy
$3\alpha-4/3<\delta<(5\alpha-2)/3$. Set
\[
\eta:=1-\alpha-\frac{1-\alpha^2-\beta(2-\beta)}{\delta}.
\]
Then for every real number $c_0<\eta$, one has
\begin{equation}\label{eq:LM-param}
\sum_{1\le m\le x^\beta}\left(\pi\!\left(\frac{x+x^\alpha}{m}\right)
-\pi\!\left(\frac{x}{m}\right)\right)
\ge c_0\,x^\alpha
\end{equation}
for all sufficiently large $x$. Moreover, the primes counted on the left-hand
side of~\eqref{eq:LM-param} are distinct. Consequently, for all sufficiently
large $x$, there exist at least $c_0\,x^\alpha$ distinct primes $p$ for which
$mp\in(x,x+x^\alpha]$ for some integer $m\le x^\beta$.
\end{lemma}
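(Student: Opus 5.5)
The plan is to derive Lemma~\ref{lem:LM-param} from \cite[Eq.~(12)]{LaishramMurty2012}, then check the distinctness claim directly, and finally read off the counting consequence.

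\textbf{Step 1: the lower bound.} Laishram and Murty bound the sum $S(x)=\sum_{m\le x^\beta}\bigl(\pi((x+x^\alpha)/m)-\pi(x/m)\bigr)$ from below. Their Eq.~(12) is a sieve-type estimate of the form $S(x)\ge(\eta+o(1))\,x^\alpha$ (up to logarithmic normalisation), with $\eta$ given in terms of $\alpha,\beta$ and an auxiliary level parameter $\delta$. That estimate is valid exactly in the stated range $3\alpha-4/3<\delta<(5\alpha-2)/3$, which comes from the admissible mean-value and zero-density inputs they use. I would first restate their inequality in this notation. This means making explicit that the main term is $x^\alpha\sum_{m\le x^\beta}1/(m\log(x/m))$ times a factor $\ge 1-\alpha-\bigl(1-\alpha^2-\beta(2-\beta)\bigr)/\delta$, and that the error terms are $o(x^\alpha)$. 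Given any $c_0<\eta$, the $o(1)$ is eventually below $\eta-c_0$, which yields~\eqref{eq:LM-param} for large $x$. The main obstacle is this bookkeeping: confirming that the normalisation in \cite{LaishramMurty2012} really gives $c_0x^\alpha$ and not $c_0x^\alpha/\log x$, and that their hypotheses reduce to exactly $0<\beta<\alpha<\tfrac12$ together with the $\delta$-window. I would check this by comparing their final display with the definition of $\eta$ term by term.

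\textbf{Step 2: distinctness.} Suppose a prime $p$ is counted for two indices $m<m'\le x^\beta$. Then
\[
p\in\Bigl(\tfrac{x}{m},\tfrac{x+x^\alpha}{m}\Bigr]\cap\Bigl(\tfrac{x}{m'},\tfrac{x+x^\alpha}{m'}\Bigr],
\]
which forces $x/m<(x+x^\alpha)/m'$, that is, $m'-m<m\,x^{\alpha-1}\le x^{\alpha+\beta-1}$. Since $\alpha+\beta<1$, the right-hand side is less than $1$ for $x>1$. This contradicts $m'-m\ge1$. Hence each prime is counted for at most one $m$.

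\textbf{Step 3: the consequence.} By Step~2, $S(x)$ equals the number of distinct primes $p$ having some $m\le x^\beta$ with $x<mp\le x+x^\alpha$. Combining this with Step~1 gives at least $c_0x^\alpha$ such primes for all sufficiently large $x$.
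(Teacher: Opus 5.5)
Your proposal is correct and is essentially the paper's proof: quote Laishram--Murty's Eq.~(12), absorb its $\varepsilon'$ into the slack $\eta-c_0$, and get distinctness from $m x^{\alpha}<x$ for $m\le x^{\beta}$, using $\alpha+\beta<1$. You compare arbitrary $m<m'$ directly, whereas the paper checks $\frac{x+x^\alpha}{m+1}\le\frac{x}{m}$ for consecutive $m$; one caution on your bookkeeping paragraph: Eq.~(12) is already stated as a lower bound $\bigl(1-\alpha-\varepsilon'-\frac{1-\alpha^2-\beta(2-\beta)}{\delta}\bigr)x^\alpha$, with no main term $x^\alpha\sum_{m\le x^\beta}\frac{1}{m\log(x/m)}$ --- and a bound of the shape you guessed would only give about $\eta\log\frac{1}{1-\beta}\,x^\alpha$, since that sum tends to $\log\frac{1}{1-\beta}<1$, which falls short of the lemma for $c_0$ close to $\eta$.
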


\begin{proof}
Since $0<\beta<\alpha<\tfrac12$ and $3\alpha-4/3<\delta<(5\alpha-2)/3$, the
estimate stated in~\cite[Eq.~(12)]{LaishramMurty2012} applies with our
parameters $\alpha,\beta,\delta$. More precisely, for every $\varepsilon'>0$
and all sufficiently large $x$,
\[
\sum_{1\le m\le x^\beta}\left(\pi\!\left(\frac{x+x^\alpha}{m}\right)
-\pi\!\left(\frac{x}{m}\right)\right)
\ge\left(1-\alpha-\varepsilon'-\frac{1-\alpha^2-\beta(2-\beta)}{\delta}\right)
x^\alpha.
\]
Since $c_0<\eta$, we may choose $\varepsilon'>0$ so small that
\[
1-\alpha-\varepsilon'-\frac{1-\alpha^2-\beta(2-\beta)}{\delta}>c_0,
\]
proving~\eqref{eq:LM-param}. It remains to prove the distinctness statement,
which is essentially equivalent to the inequality $\frac{x+x^\alpha}{m+1} \le
\frac{x}{m}$ for all $1\le m\le x^\beta$. Multiplying this inequality by
$m(m+1)$ gives $mx^\alpha \le x$, which holds for all $m\le x^\beta$ since
$\alpha+\beta<1$. The final assertion follows immediately.
\end{proof}

\subsection{A weighted Hall-type lemma}

We shall also use the following weighted variant of Hall's theorem.

\begin{lemma}\label{lem:weighted-Hall}
Let $G=(\mathcal{L},\mathcal{R},\mathcal{E})$ be a finite bipartite graph, and
suppose that each edge $e\in \mathcal{E}$ is assigned a non-negative weight $w(e)$.
Assume that there exists a real number $L_0>0$ such that
\[
\sum_{\substack{e\in \mathcal{E}\\ e\ni u}}w(e)\ge L_0
\qquad\text{for every }u\in \mathcal{L},
\]
and
\[
\sum_{\substack{e\in \mathcal{E}\\ e\ni v}}w(e)\le L_0
\qquad\text{for every }v\in \mathcal{R}.
\]
Then $G$ has a matching covering all vertices of $\mathcal{L}$.
\end{lemma}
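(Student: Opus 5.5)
We verify Hall's condition directly by double counting the weights. By Hall's marriage theorem it suffices to show that for every subset $S\subseteq\mathcal{L}$, the neighbourhood $N(S)\subseteq\mathcal{R}$ satisfies $|N(S)|\ge|S|$. The idea is to compare the total weight of the edges incident to $S$, counted once from each side of the graph.

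Fix $S\subseteq\mathcal{L}$ and let $\mathcal{E}_S$ be the set of edges with an endpoint in $S$. Every edge of $\mathcal{E}_S$ has its other endpoint in $N(S)$. Each edge of $G$ has exactly one endpoint in $\mathcal{L}$, so summing over the left side gives
\[
W:=\sum_{e\in\mathcal{E}_S}w(e)=\sum_{u\in S}\sum_{\substack{e\in\mathcal{E}\\ e\ni u}}w(e)\ge L_0|S|.
\]
On the right side, since $\mathcal{E}_S$ is contained in the set of edges incident to $N(S)$ and all weights are non-negative,
\[
W\le\sum_{v\in N(S)}\sum_{\substack{e\in\mathcal{E}\\ e\ni v}}w(e)\le L_0|N(S)|.
\]
Combining the two bounds gives $L_0|S|\le L_0|N(S)|$. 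Dividing by $L_0>0$ yields $|N(S)|\ge|S|$.

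Hall's theorem then produces a matching covering $\mathcal{L}$. There is no real obstacle here. The only points needing care are that each edge is counted exactly once from the $\mathcal{L}$ side, and that non-negativity of the weights justifies enlarging the sum on the $\mathcal{R}$ side to all edges at $N(S)$. The hypothesis $L_0>0$ is exactly what permits the final division.
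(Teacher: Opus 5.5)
Your proof is correct and is essentially the paper's argument. Both bound the total weight of the edges at $S$ below by $L_0|S|$ and above by $L_0|N(S)|$, using non-negativity of the weights, and then invoke Hall's theorem; your remarks on single counting from the $\mathcal{L}$ side and on the role of $L_0>0$ make explicit what the paper uses implicitly.
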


\begin{proof}
Let $S\subseteq \mathcal{L}$, and let $N(S)\subseteq \mathcal{R}$ be its
neighborhood. Then
\[
L_0|S|\le\sum_{u\in S}\sum_{\substack{e\in \mathcal{E}\\ e\ni u}}w(e).
\]
Every edge counted on the right-hand side has its $\mathcal{L}$-endpoint in $S$,
hence its $\mathcal{R}$-endpoint lies in $N(S)$. Therefore
\[
\sum_{u\in S}\sum_{\substack{e\in \mathcal{E}\\ e\ni u}}w(e)
\le\sum_{v\in N(S)}\sum_{\substack{e\in \mathcal{E}\\ e\ni v}}w(e)
\le L_0|N(S)|.
\]
Thus $|S|\le|N(S)|$. So Hall's condition holds, and Hall's theorem yields a
matching covering $\mathcal{L}$.
\end{proof}

\section{Proof of the power-saving bound}\label{sec:rho}

\begin{proof}[Proof of Theorem~\ref{thm:main-rho}]
Since $\rho = \frac{13-\sqrt{69}}{10} < \frac{19}{40}$, without loss of
generality we may assume that $0<\varepsilon<\tfrac{19}{40}-\rho$. Choose
$\alpha$ so that $\rho<\alpha<\rho+\varepsilon$. Then $\alpha < \frac{19}{40}$
and $5\alpha^2-13\alpha+5<0$, where the last inequality is equivalent to
\[
1-\alpha- \frac{1-2\alpha}{(5\alpha-2)/3} > 0.
\]
We now define
\[
F(\beta,\delta):=1-\alpha-\frac{1-\alpha^2-\beta(2-\beta)}{\delta}.
\]
Then $F\bigl(\alpha,(5\alpha-2)/3\bigr)>0$, while $\alpha < \rho + \varepsilon
<1/2$ implies $3\alpha-4/3<(5\alpha-2)/3$. By continuity of $F$, we may
therefore choose real numbers $\beta,\delta$ such that
\[
0<\beta<\alpha<\tfrac12,
\qquad
3\alpha-\tfrac43<\delta<\frac{5\alpha-2}{3},
\]
and $\eta:=F(\beta,\delta)>0$. Applying Lemma~\ref{lem:LM-param} with
$c_0:=\eta/2>0$ then gives
\begin{equation}\label{eq:LM-main-rho}
\sum_{1\le m\le x^\beta}\left(\pi\!\left(\frac{x+x^\alpha}{m}\right)
-\pi\!\left(\frac{x}{m}\right)\right)\ge c_0\,x^\alpha
\end{equation}
for all sufficiently large $x$.

Now let $n$ be sufficiently large and define
\[
H:=\lceil 2n^\alpha\rceil,\qquad
J:=\lfloor n^\beta\rfloor,\qquad
T:=\lfloor n/H\rfloor, \qquad
t:= \lfloor H^{19/21} \rfloor.
\]
With these definitions we further define the intervals
\[
B_i:=\big(iH, (i+1)H\big] \qquad (1 \le i < T).
\]

By Lemma~\ref{lem:private-prime} it is sufficient to find integers $m_1, \ldots,
m_{T-1} \le J$ and distinct primes $p_1, \ldots, p_{T-1} > J$ such that $m_ip_i
\in B_i$ for all $1 \le i < T$. Indeed, the first element $m_1p_1 \in B_1$ is
then at most $2H$, the difference between two consecutive elements is smaller
than $2H$, and the largest element $m_{T-1}p_{T-1} \in B_{T-1}$ is larger than
$n - 2H$. Assuming such $m_i$ and $p_i$ are chosen, with $C_\varepsilon = 5$ we
would then get $g(n) \le 2H \le C_\varepsilon n^{\rho+\varepsilon}$, and we can
cover the finitely many remaining values of $n$ by possibly enlarging
$C_\varepsilon$.

Now, for $i < t$, the largest element of $B_i$ is $(i+1)H$, while the length of
$B_i$ is
\[
H = H^{19/40} H^{21/40} \ge (tH)^{21/40} \ge ((i+1)H)^{21/40}.
\]
Lemma~\ref{lem:BHP} therefore implies that for every $i < t$, $B_i$ contains a
prime $p_i$. Moreover, for all such $i$ we have
\[
p_i \ge p_1 > H > J.
\]
We therefore choose $m_i = 1$ for all $1 \le i < t$.

As for the intervals $B_i$ with $t \le i < T$, with $x_i := iH$ the left
endpoint of $B_i$, define
\[
\mathcal{R}_i :=\bigl\{p : p \notin \{p_1, \ldots, p_{t-1}\} \text{ is prime and }mp \in B_i \text{ for some positive integer }m\le x_i^{\beta} \bigr\}.
\]
For $p \in \mathcal{R}_i$ we note that we also have
\[
p > \frac{x_i}{x_i^{\beta}} \ge x_t^{1 - \beta} = (tH)^{1 - \beta} > (n^{40\alpha/21})^{21/40} = n^{\alpha} > J.
\]
We now consider the bipartite graph with left vertex set $\mathcal{L} :=
\{B_{t}, \ldots, B_{T-1}\}$ and right vertex set $\mathcal{R} := \bigcup_{i=t}^{T-1} \mathcal{R}_i$. We then join a vertex $B_i \in \mathcal{L}$ to a prime
$p \in \mathcal{R}_i$, and we define the weight of such an edge $(B_i, p)$ by
\[
w(B_i,p):=\sum_{\substack{1\le m\le x_i^{\beta} \\ mp\in B_i }}\frac{1}{m}.
\]
As the intervals $B_i$ are disjoint, every fixed $m$ occurs in the summation on
the right-hand side for at most one index $i$. Hence, for all $p \in
\mathcal{R}$ we have
\[
\sum_{B_i \in \mathcal{L}} w(B_i,p) \le \sum_{1 \le m \le n} \frac{1}{m}.
\]
We therefore deduce by Lemma~\ref{lem:weighted-Hall} that it now suffices to
show that for all $t \le i < T$ we have
\begin{equation}\label{eq:row-after-delete}
\sum_{p\in\mathcal R}w(B_i,p) \ge \sum_{1 \le m \le n} \frac{1}{m}.
\end{equation}
Indeed, this would imply the existence of a matching, which gives us for all $t
\le i < T$ a prime $p_i > J$ such that $m_ip_i \in B_i$ for some $m_i \le
\left\lfloor x_i^{\beta} \right\rfloor \le \left\lfloor n^{\beta} \right\rfloor
= J$.

In order to show \eqref{eq:row-after-delete}, fix $i$ and $m$. There are at most
two primes $p\in\{p_1,\ldots,p_{t-1}\}$ such that $mp\in B_i$. Indeed, if
$p_j,p_k,p_\ell$ were three such primes with $1\le j<k<\ell<t$, then $\ell\ge
j+2$, and since $p_j\in B_j$ and $p_\ell\in B_\ell$ we have $p_\ell-p_j>H$. Thus
\[
mp_\ell-mp_j=m(p_\ell-p_j)>mH\ge H,
\]
contradicting the fact that both $mp_j$ and $mp_\ell$ lie in the interval $B_i$,
which has length $H$. Secondly, as $x_i^{\alpha} < n^{\alpha} < H$, the
interval $(x_i, x_i + x_i^{\alpha}]$ is contained in $B_i$. Hence, applying
equation \eqref{eq:LM-main-rho} gives
\begin{align*}
\sum_{p\in\mathcal{R}}w(B_i,p) &\ge \sum_{1 \le m\le x_i^\beta}\frac{1}{m}\left(\pi\!\left(\frac{x_i+x_i^\alpha}{m}\right)
-\pi\!\left(\frac{x_i}{m}\right)\right) - \sum_{\substack{1 \le m\le x_i^{\beta} \\ mp\in B_i \\ p \in \{p_1, \ldots, p_{t-1}\}}}\frac{1}{m} \\
&\ge c_0\,x_i^{\alpha - \beta} - 2\sum_{1 \le m \le n} \frac{1}{m} \\
&\ge c_0\,n^{\alpha(\alpha - \beta)} - 2\sum_{1 \le m \le n} \frac{1}{m} \\
&> 3\log(3n) - 2\log(3n) \\
&> \sum_{1 \le m \le n} \frac{1}{m}. \qedhere
\end{align*}
\end{proof}

\newpage

\section*{Appendix: Formal Discovery and Verification with Aristotle}
\label{sec:appendix}

The proof of Theorem~\ref{thm:main-all} was autonomously discovered and formally
verified in Lean by \emph{Aristotle}, a formal reasoning agent developed by
\textnormal{\href{https://www.harmonic.fun/}{Harmonic}}~\cite{Achim2025} and
publicly available for free at
\href{https://aristotle.harmonic.fun/}{\nolinkurl{aristotle.harmonic.fun}}.

We first asked Aristotle to investigate the second subquestion in
Problem~\ref{prob:34}. It autonomously produced a proof of
Theorem~\ref{thm:main-all}, together with a Lean formalization. After that, the
human authors studied the first subquestion in Problem~\ref{prob:34} more
carefully and managed to prove the stronger conclusion stated in
Theorem~\ref{thm:main-rho}. We then asked Aristotle to formalize the latter
proof, under the assumption of the results by Baker--Harman--Pintz and
Laishram--Murty that we used. Aristotle successfully completed this second
(conditional) formalization as well.

The full Lean file combining both formalizations can be inspected interactively
in the
\href{https://live.lean-lang.org/#project=mathlib-v4.28.0&url=https://gist.githubusercontent.com/pitmonticone/f02fc29ec8d16dc62cb5096749d4ccab/raw/0193455d7fef0bc4b0682699ea199e03ec06c1c9/Sarkozy_UPNT_34.lean}%
{Lean~4 Web Editor}.

\end{document}